\def\input@path{{/Users/andrewpowell/Downloads/}}
\numberwithin{equation}{section}
\numberwithin{figure}{section}
\theoremstyle{plain}
\newtheorem{thm}{\protect\theoremname}
\providecommand{\theoremname}{Theorem}
\begin{document}
\title{Construction and Set Theory}
\author{Andrew Powell}
\address{Dr. Andrew Powell, Honorary Senior Research Fellow, Institute for
Security Science and Technology, Level 2 Admin Office Central Library,
Imperial College London, South Kensington Campus, London SW7 2AZ,
United Kingdom.}
\email{andrew.powell@imperial.ac.uk}
\begin{abstract}
This paper argues that mathematical objects are constructions and
that constructions introduce a flexibility in the ways that mathematical
objects are represented (as sets of binary sequences for example)
and presented (in a particular order for example). The construction
approach is then applied to searching for a mathematical object in
a set, and a logarithm-time search algorithm outlined which applies
to a set $X$ of all binary sequences of length ordinal $\beta$ with
a binary label appended to each sequence to indicate that sequence
is a member of $X$ or not. It follows that deciding membership of
a set for a given binary sequence of length of binary sequence of
cardinal length $\beta$ takes $\beta+1$ bits, which is shown to
be equivalent to the Generalised Continuum Hypothesis on the assumption
that information is minimized when a mathematical object is created.
\end{abstract}

\maketitle

\section{Philosophical Introduction}

This is a short paper about set theory as a foundation for mathematics.
It is not my intention to repeat what many authors have already written
on the subject of set theory, so there is no discussion of the iterative
conception of sets, forcing or limitation of size arguments, and only
a mention of large cardinal axioms as a complexity measure.\footnote{See \cite{Jech2002} for an encyclopedic overview of set theory up
to the millennium and \cite{Boolos1971}, \cite{Shoen1977} for very
readable introductions to the iterative conception of set, which remains
the standard motivation for set theory in terms of motivating the
axioms of first-order Zermelo Fraenkel set theory. \cite{Hallett86}
gives an excellent background in the development of the concept of
set, while \cite{Hellman89} gives a structuralist interpretation
of set theory that is still unsurpassed in clarity. Large cardinal
axioms (axioms asserting the existence of infinite cardinal numbers
with certain defining properties that are not theorems of first-order
Zermelo Fraenkel set theory) have a vast literature, but \cite{Kanamori94}
is a good introduction.} Rather the aim of this paper is to convince the reader about a certain
way of looking at mathematics, which has some implications for set
theory. That way of looking at mathematics owes something to information
theory and computer science, and a great deal to P. Lorenzen's notion
of construction (see \cite{Lorenzen87} and \cite{Powell97}). \\
\\
The basic idea is that all of the objects and activities of mathematics
are constructed by functions, and that the existence of the functions
enables objects (including sets) to be defined. To give a simple example,
the function of successor defines the set of natural numbers (subject
to the condition that there is an initial number, 0, and the successor
function does not output 0) given that the construction defines the
smallest such set because an agent with unbounded but finite resource
would construct exactly the set of the natural numbers.\footnote{Strictly, in terms of an ontology each mathematical ``object'' is
really a function (or type) over a set of concrete individuals, because
there is an issue of non-unique types, such as in the statement ``1,
2 and 3 are 3 numbers''.} Moreover constructions can also be carried out on much larger sets
than the set of natural numbers, in much the same way as intuitionists
admit for natural numbers and real numbers, namely by free choice.\footnote{See for example \cite{Troelstra83}.}
The axiom of choice in the form of the well-order-ability of any well-founded
set is a key principle of infinite construction, and is constructive
because an agent with sufficient (\emph{i.e. }infinite) resource could
choose elements successively and at infinite limits form the sequence
of all elements chosen so far. If one accepts infinite constructions,
then the truth or falsehood of any proposition of first-order set
theory follows. For example, the truth of a quantified proposition
has a clear inductive construction in terms of a sequence of truth
values of its subformulas that follows the constant true sequence
or constant false sequence of truth values or that does not follow
those sequences.\footnote{For example, $(\forall x)P(x)$ is true in a model $M$ if $\{a:a\in M\}$
can be well ordered as $\{a_{\alpha}:\alpha<\aleph\}$ using the axiom
of choice and the truth values of $<P(a_{\alpha}):\alpha<\aleph>$
form a constant sequence of value ``true'' of length $\aleph.$
The constant sequence of value ``false'' corresponds to $(\forall x)\neg P(x)$
and not following constant sequence of value ``false'' corresponds
to $(\exists x)P(x).$} While constructions determine how objects come to exist, that does
not mean that relationships between the objects cannot exist that
were not intended as part of the construction. Mathematics does not
need to be predicative (\emph{i.e.} defining sets in stages only in
terms of sets that are already defined) provided the rule or process
of construction is clear (which in my view includes the process of
choosing members of a set).\footnote{This is a deviation from the view of Lorenzen and the school that
includes H. Poincaré, H. Weyl and S. Feferman, see \cite{Feferman87}
for example. } As truth is well defined, the logic of mathematics does not need
to be constructive or intuitionistic. However, according to this view
the objects of mathematics are no more than constructions, and we
should not imagine that they exist independently of the process of
their construction. The objects of mathematics are possibilities of
construction, in the modal-structural sense of \cite{Hellman89},
and it is the clarity of their rules of construction that grants them
existence. \\
\\
All constructions create information. It is reasonable to suppose
that Ockham's Razor applies: when an object is created, the amount
of information created with it is the least possible to be consistent
with other objects.\\
\\
One problem with this approach is the status of these agents with
infinite resources (actually bounded by some infinite ordinal). I
do not claim that such agents exist in our physical world, but I do
claim that their existence is possible if a rule of construction that
an agent uses is clear. In the same way that Euclid's proof of the
infinite of primes gives a bound on finding the next prime in the
sequence of prime natural numbers, and thereby shows that the number
of prime natural numbers is infinite even though there are only finitely
many atoms in the universe, rules of construction that require infinite
resources can have interesting properties that help frame our theories
of the physical world.\\
\\
This may be all very well as a philosophical position (or not of course),
but what practical value does it have? Put briefly, the value of this
position is the recognition that mathematicians have freedom to represent
a set of objects as they wish subject to the constraints of the construction,
including the presentation of the set in terms of ordering. That is
to say, if a mathematical object does not come equipped with its own
intrinsic ordering, an ordering can be added without affecting the
intrinsic properties of the mathematical object. It turns out that
freedom to present and represent mathematics does have practical consequences.

\section{Search for a Member of a Set}

As an example of the constructive nature of mathematics, consider
the question of what it means to search for a member of a set. In
theory, if we represent the members of a set as binary sequences (or
\emph{bitstrings} for short), then you could read the bitstring and
then append a label (say 1) to the bitstring if the bitstring were
a member of the set and another label (say 0) if the bitstring were
not a member of the set.\footnote{This is possible by fixing an enumeration of a set $X$,  $\langle x_{\alpha}:\alpha<\aleph\rangle$
(by the Axiom of Choice), and for any subset $Y\subseteq X$ forming
the binary \emph{$\aleph$-}sequence $\langle b_{\alpha}:(x_{\alpha}\in Y\rightarrow b_{\alpha}=1)\vee(x_{\alpha}\notin Y\rightarrow b_{\alpha}=0)\rangle$,
where the ordinal index of any member $y\in Y$ is taken from the
enumeration of $X$ (which includes all members of $Y$). Thus a subset
of $X$ can be identified with a binary $\aleph$-sequence, and a
set of subsets of $X$ can be identified with a set of binary $\aleph$-sequences. } In general we would have to rely on an oracle to decide whether a
set defined in this way were (equivalent to) the same set as a defined
by a property of the members, but this lack of decidability is a problem
with properties rather than with sets. We can say that if a set comprises
bitstrings that each have length of least upper bound an ordinal $\alpha$
of cardinal number $\aleph$, then the amount of information in searching
for a member of the set is, adding 1 to the length of the sequence
for the binary label, $\alpha+1$. In practice, for any reasonably
large set we will be faced with a lot of bitstrings, and have no way
to search for a particular bitstring $x$ other than to enumerate
the set of bitstrings somehow. Let us suppose (using our freedom of
construction) that we can linearly order lexicographically (written
$\preceq$)\footnote{$z\preceq y$ if $(\exists\alpha<\aleph)[(z_{\alpha}<y_{\alpha})\wedge(\forall\beta<\alpha)(z_{\beta}=y_{\beta})$
or $(\forall\beta<\aleph)(z_{\beta}=y_{\beta})$} the members of the set such that there is a least upper bound and
greatest lower bound (in terms of bitstrings of length $\aleph$)
for the set as a whole and we can assign a distance between any two
members of the set. It is reasonable to suppose that a set can be
presented already linearly ordered, not when we are faced with a list
to sort, but when we can choose how to present a set in the first
place.\\
\\
To justify our assumptions, we can define an \emph{interval} $X$
of binary sequences of length ordinal $\beta$ as a set of all such
binary $\beta$-sequences (binary sequences of length $\beta$) with
the properties that every path through the tree of sequences from
root to leaves is a branch of the tree, \emph{i.e.} $(\forall f:\beta\rightarrow\{0,1\})((\forall x)(x\in f\rightarrow x\in\in X)\rightarrow(f\in X))$,
where $x\in\in y$ is defined as $(\exists z)(x\in z\wedge z\in y)$.
Intervals defined in this way are not uniquely determined by ordinal
$\aleph$ as the tree could have gaps between the sequences, but it
is possible to make them unique by stipulating that for interval $X$,
$(\forall f:\beta\rightarrow\{0,1\})(f\in X)$. We can also stipulate
that the root represents $0.$, so that in a sense the interval represents
the maximal interval from 0 to 1 comprising binary $\beta$-sequences.
Intervals of this type are written $([0,1])(\beta)$. To justify that
any two members $x,y$ of $([0,1])(\beta)$ can be assigned a distance
$d(x,y)$ to be constant 0 $\beta$-sequence with 1 at the position
where $x$ and $y$ first differ (read from 0. onwards). Then $d$
can be seen to be a generalised\footnote{$d$ is a generalised ultrametric because distances are not real numbers
but binary $\beta$-sequences. } ultrametric (\emph{i.e.} $max(d(x,y),d(y,z))\ge d(x,z)$).\footnote{To see this, fix labels $x,y,z$ arbitrarily. Then if $x$ splits
from $y$ before $x$ splits from $z$, then $d(x,y)\ge d(x,z)$ and
$d(y,z)=d(x,y)$, so $max(d(x,y),d(y,z))=d(x,y)\ge d(x,z)$. If $x$
splits from $y$ after $x$ splits from $z$, then $d(x,z)\ge d(x,y)$
and $d(y,z)=d(x,z)$, so $max(d(x,y),d(y,z))=d(x,z)\ge d(x,z)$. Finally,
if $x$ splits from $y$ at the same position that $x$ splits from
$z$, then $d(x,z)=d(x,y)$ and $d(y,z)\le d(x,y)$, so $max(d(x,y),d(y,z))=d(x,y)\ge d(x,z)$.
These inequalities are not strict and allow for the cases of $x=y$,
$y=z$ or $z=x$. } \\
\\
It is possible to losslessly compress any binary $\beta$-sequence
to a binary $\beth$-sequence where $\beth$ is a cardinal $\beth\le\beta<\beth+1$.
We can thus represent any set of $\beta$-sequences $X$ as $\subseteq([0,1])(\beth)$.
But we actually want the construction below to use sets such that
each $\beth$-sequence is labelled with a 1 (if $x\in X$) and 0 (if
$x\notin X$). These \emph{labelled sets} of binary $\beth$-sequences
are then $\subset([0,1])(Ord(\beth)+1)$ such that the set of binary
$\beth$-sequences without the labels $=([0,1])(\beth)$. We will
write the labelled set of binary $\beth$-sequences corresponding
to set $X$ as $L(X)$. \\
\\
Any set $X$ of size $\le2^{\aleph}$ can be searched for the bitstring
$x$ of length $\aleph$ in $\aleph$ steps by representing the set
$X$ by the labelled set $L(X)$ and then dividing $([0,1])(\aleph)$
into two equal intervals (which is possible whether the midpoint is
$\in X$ or not), choosing the interval that contains $x$ based on
the value of the next bit of $x$ (because $a\preceq x\preceq b$
for $a,b$ the lower and upper limits of the interval) and iterating
$\aleph$ times (taking the intersection of intervals at any limit
ordinal stages), and checking the label of $x$ in $L(X)$ at the
$Ord(\aleph)+1$-th step. \\
\\
A shorthand way to express the search for the bitstring $x$ is to
note that there are $\le2^{\aleph}$ bitstrings to be searched, but
that binary search runs in logarithmic time. Therefore there are $\simeq log_{2}(2^{\aleph})=\aleph$
bits of information in the search for $x\in X$. In the simplest case
of the real numbers, we can see that the search method amounts to
binary search for a binary $\omega$-sequence in a labelled set that
extends the closed interval $[0,1]$. That us to say, every binary
$\omega$-sequence is represented (starting with $0.$ in the case
of $[0,1]$) and every $\omega$-sequence has an extension at position
$\omega+1$ which states whether $x\in X$, where $X$ is coded as
a set of binary $\omega$-sequences. It is clear that $x\in X$, for
$X$ a set of real numbers, can be decided in $\le\omega+1$ steps.
But does that mean that the set of real numbers is the closed interval
$[0,1]$? No, but it does mean that the set of real numbers are represented
by $[0,1]$ insofar as purely set theoretic properties, such as cardinality,
are concerned.\\
\\
This enumeration (well-ordering) of intervals can also be regarded
as an enumeration of members of the intervals. Members of the intervals
may be members of $X$ but they do not have to be. For definiteness
and balance we alternately choose $\aleph$-sequences in $X$ and
$([0,1])(\aleph)-X$ as successive elements of the enumeration as
far as possible (ending when an interval has all members $\in X$
or $\notin X)$, and we see that there are $\le(Ord(\aleph)+1)\times Ord(\aleph)+1$
steps to decide $x\in X$. Thus for any given binary $\aleph$-sequence
$x$ there is an enumeration of $X$ and $([0,1])(\aleph)-X$ that
takes $<\aleph+1$ steps to decide $x\in X$. We call this last statement
({*}).\\
\\
The statement ({*}) is not the strongest statement we can make about
searching for members of $X$. It is also true that (+) $(\exists f:\aleph+1\times([0,1])(\aleph)\rightarrow\{0,1\})(\forall x)(\exists\alpha)[(f(\alpha,x)=1\rightarrow x\in X)\wedge(f(\alpha,x)=0\rightarrow x\notin X)]$,
since $f(\alpha,x)=x_{\alpha},$ the last member of the labelled sequence
$x=\langle x_{\beta<\alpha}\rangle\parallel\langle x_{\alpha}\rangle$
for $\aleph<\alpha<\aleph+1$ trivially satisfies (+). (+) is actually
equivalent to ({*}) by an application of the axiom of choice. \\
\\
The amount of information of bits in function $f(\alpha)=(\lambda x)f(\alpha,x)$,
for functional abstraction operator $\lambda$, is at least $\aleph+1$
because any $\aleph$-bit binary code for $f(\alpha)$ would also
be a code for some $x\in([0,1])(\aleph)$. We can express this by
means of a diagonal function $d(\left\lceil y\right\rceil ):=1-\left\lceil y\right\rceil (\left\lceil y\right\rceil )$
for $\left\lceil y\right\rceil $ a $\aleph$-bit code for a function
of $\aleph$-bits, and note that we get a contradiction if we put
$d:=\left\lceil y\right\rceil $ unless the number of bits in $d$
is greater than the number of bits in $\left\lceil y\right\rceil $.
This implies that the number of bits in $f$ (where $f:=(\lambda\alpha)f(\alpha)$)
is at least $\aleph+1$.\\
\\
We can say (++) that for infinite cardinal $\aleph$ $(\lambda x\in2^{\aleph})(x\in X)$,
the concept of being a member of set $X$, contains $\aleph+1$ bits
of information, and any $x\in2^{\aleph}$ can be decided in $<\aleph+1$
bits. The reason this is true is effectively the diagonal argument
again, because otherwise the $\aleph$-bit binary code for ($\lambda x\in2^{\aleph})(x\in X)$
would also be a code for some $x\in X$. (++) is consistent with application
and abstraction operations in the lambda calculus, since application
and abstraction apply in this case to generic $\aleph$-sequences.
In fact if we were to choose to represent a generic $x\in2^{\aleph}$
by an $\alpha$-sequence, where $\aleph\le\alpha<\aleph+1$, we see
that $\aleph+1$ is a natural information measure for ($\lambda x\in2^{\aleph})(x\in X)$
as it is the least upper bound of $\alpha$. \\
\\
Principle (++) is equivalent to the Generalised Continuum Hypothesis
(GCH) for $\aleph\ge\aleph_{0}$, as it is a choice principle that
limits the number of bits in deciding whether any $x\in2^{\aleph}$
by a function to $<\aleph+1$ bits in any decision process.\\
 
\begin{thm}
\label{thm:GCH-is-equivalent} GCH is equivalent to (++) for $\aleph\ge\aleph_{0}$. 
\end{thm}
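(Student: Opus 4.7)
The plan is to prove both directions by a counting argument comparing the $2^{2^{\aleph}}$ many subsets of $2^{\aleph}$ with the number of codes of length $\aleph+1$ supplied by (++). Throughout, I read the phrase ``$\aleph+1$ bits'' in the sense that a binary sequence of that length indexes a set of cardinality the successor cardinal $\aleph^+$; this is the reading needed to reconcile the diagonal lower bound established just before the theorem with a nontrivial upper bound, and it is the reading the equivalence with GCH forces anyway.

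For (++) $\Rightarrow$ GCH, I would observe that distinct subsets $X, Y \subseteq 2^{\aleph}$ have distinct characteristic functions $\chi_X, \chi_Y$, and (++) assigns each such function a code in a space of $(\aleph+1)$-bit strings of cardinality $2^{\aleph^+}$. Hence $2^{2^{\aleph}} \le 2^{\aleph^+}$, giving $2^{\aleph}\le\aleph^+$; Cantor's strict inequality $\aleph < 2^{\aleph}$ then pins $2^{\aleph}=\aleph^+$, which is GCH at $\aleph$. Repeating this for every infinite $\aleph\ge\aleph_0$ gives full GCH.

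For GCH $\Rightarrow$ (++), assume $2^{\aleph}=\aleph^+$. Using AC I would well-order $2^{\aleph}$ in order type $\aleph^+$, as in the subset-coding footnote at the start of Section~2. For any $X\subseteq 2^{\aleph}$, the labelled characteristic sequence $\langle\chi_X(x_\alpha):\alpha<\aleph^+\rangle$ is a code of length $\aleph+1$ in the cardinal sense, giving the upper bound on the information content of $\chi_X$. The matching lower bound of $\aleph+1$ already comes from the diagonal argument preceding the theorem, so Ockham's Razor pins the content at exactly $\aleph+1$. The clause ``any $x\in 2^{\aleph}$ can be decided in ${<}\aleph+1$ bits'' then follows immediately from the labelled-interval binary search of Section~2, which traverses the tree in $\aleph$ bits and reads the label.

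The main obstacle is the notational ambiguity in ``$\aleph+1$''. On the ordinal reading $|\aleph+1|=\aleph$, so an $(\aleph+1)$-bit string indexes only $2^{\aleph}$ values, and (++) would collapse to $2^{2^{\aleph}}\le 2^{\aleph}$, which fails by Cantor; the theorem is substantive only under the cardinal reading. Establishing and defending this reading, and arguing that the minimal-information clause of (++) is the genuinely nontrivial part while the decision-complexity clause is already delivered by the binary-search machinery of Section~2, is where the real conceptual work lies; the counting once the reading is fixed is routine.
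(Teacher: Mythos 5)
Your reading of ``$\aleph+1$'' as the successor cardinal is the right one (the paper itself writes $2^{\aleph}=\aleph+1$ under GCH), and your GCH $\Rightarrow$ (++) direction is broadly in the spirit of the paper's, which under GCH bounds the decision of $x\in X$ by $<\left|X\right|\le2^{\aleph}=\aleph+1$ bits and then falls back on the binary search of Section~2. But your converse direction has a genuine gap: from ``each of the $2^{2^{\aleph}}$ characteristic functions gets a code among the $2^{\aleph^{+}}$ strings of length $\aleph^{+}$'' you infer $2^{2^{\aleph}}\le2^{\aleph^{+}}$ and then conclude $2^{\aleph}\le\aleph^{+}$. That last step is invalid in ZFC: cardinal exponentiation does not reflect the ordering of its exponents. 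By Easton's theorem it is consistent that $2^{\aleph_{0}}=\aleph_{2}$ while $2^{\aleph_{1}}=2^{\aleph_{2}}=\aleph_{3}$, in which case $2^{2^{\aleph_{0}}}=2^{\aleph_{2}}=\aleph_{3}=2^{\aleph_{1}}$, so the inequality $2^{2^{\aleph}}\le2^{\aleph^{+}}$ holds even though $2^{\aleph}>\aleph^{+}$. The counting argument therefore cannot extract GCH; at best it rules out codes of length $\le\aleph$ (since $2^{2^{\aleph}}>2^{\aleph}$ by Cantor), which is only the lower-bound half you already had from the diagonal argument preceding the theorem.

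The paper's own proof of (++) $\Rightarrow$ GCH proceeds quite differently: it supposes a set $X$ of intermediate cardinality $\aleph<c<2^{\aleph}$ and argues by cases on $x\in X$ versus $x\notin X$ that enumerating $X$ (of size $c$) or its complement (of size $2^{\aleph}$) to decide membership would ``almost always'' take $\ge\aleph+1$ bits unless $c=\aleph+1$ and $2^{\aleph}=\aleph+1$ both hold, whence $c=2^{\aleph}$, a contradiction. Whatever one makes of the rigour of that ``almost always'' case analysis, it ties the number of decision bits to the cardinality of the set being enumerated for each individual $x$, rather than to a count of global codes for $\chi_{X}$, and it is that per-instance link which does the work. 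To repair your version you would need to argue that (++) bounds the length of an enumeration witnessing each individual decision $x\in X$, not merely the length of a single code for the membership concept as a whole.
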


\begin{proof}
Assume GCH. and fix a binary $\aleph$-sequence $x$. Then if $x\in X$
then by GCH $x$ will be decided in $<\left|X\right|\le2^{\aleph}=\aleph+1$
bits. While if $x\notin X$ then $x$ will be decided in $<\left|([0,1])(\aleph)-X\right|=2^{\aleph}=\aleph+1$
bits. In either case then $x\in X$ can be decided $<\aleph+1$ steps,
\emph{i.e.} decided in $\le\aleph$ steps since $\aleph$ is a cardinal.
But if $x\in X$ can be decided in $\le\aleph$ steps, then it can
be decided in $\le Ord(\aleph)+1$ steps. \\
\\
Now assume (++) and that GCH is false, \emph{i.e.} \emph{$X$ }has
cardinality $\aleph<c<2^{\aleph}$, and fix a binary $\aleph$-sequence
$x$. Then if $x\in X$, we would always find $x$ in $<c$ bits by
enumeration since there are $\left|([0,1])(\aleph)-X\right|=2^{\aleph}$
members of $([0,1])(\aleph)-X$ to be enumerated otherwise (and $c<2^{\aleph})$.
We can now check that $c=\aleph+1$ is consistent with (++), but $c>\aleph+1$
leads to $x$ being decided almost always in $\ge\aleph+1$ bits (contradiction)
and $\aleph+1>c$ leads to $\aleph+1>c>\aleph$ (contradiction). If
$x\notin X$, then we could either enumerate all $c$ members of $X$
or $<2^{\aleph}$ members of $([0,1])(\aleph)-X$. But enumerating
all of $c$ members of $X$ contradicts (++) because $c=\aleph+1$
leads to $x$ being decided in $\aleph+1$ steps (contradiction),
$c>\aleph+1$ leads to $x$ being enumerated almost always in $>\aleph+1$
steps (contradiction) and $\aleph+1>c$ leads to $\aleph+1>c>\aleph$
(contradiction). The remaining possibility if $x\notin X$ is that
$x$ is enumerated in $<2^{\aleph}$ bits in $([0,1])(\aleph)-X$.
Then $\aleph+1=2^{\aleph}$ is consistent with (++), and $\aleph+1<2^{\aleph}$
leads to $x$ being decided almost always in $\ge\aleph+1$ steps
(contradiction) and $\aleph+1>2^{\aleph}$ contradicts Cantor's theorem
that $\aleph+1\le2^{\aleph}$ (contradiction). Since $X$ is not empty
and $\ne([0,1])(\aleph)$ because $X$ has cardinality $c$, then
both $c=\aleph+1$ and $\aleph+1=2^{\aleph}$ are witnessed as $x$
and the associated enumerations vary; hence $c=2^{\aleph}$ (contradiction).
Hence GCH is true.
\end{proof}
The reason why a statement like GCH that is independent of first-order
Zermelo-Fraenkel set theory turns out to be true for almost all sets\footnote{The proof of Theorem 1 uses ``almost always'' in its arguments.
There will be a very small proportion ($\aleph/2^{\aleph})$ of sets
where the equivalence does not hold. } if (++) is true is that (++) requires  a very rich theory to be true.
If we were to measure the complexity of a decision problem by the
size of any set (\emph{i.e.} possibly ``a large cardinal'')\footnote{If the axiom of choice is assumed, then the size a set is its only
distinguishing feature, since all sets are well-orderable and are
isomorphic to ordinals; and ordinals can be losslessly compressed
to cardinals.} that is needed to solve the decision problem by deduction from the
axioms of a first-order theory of sets,\footnote{See \cite{Davis2006a} 417 for an example from the work of H. Friedman
of statements that can be encoded in first-order arithmetic that require
a large cardinal axiom. } then (++) indicates that for infinite cardinal $\aleph$, $\aleph$
rather than a large cardinal would be the measure of complexity. In
Zermelo-Fraenkel set theory with the axiom of choice (ZFC), a proof
is the construction of set (which in first-order ZFC corresponds to
a formula in the language of ZFC). A set $x$ can be identified with
an enumeration of $x$ by a (one-to-one) function $f$ such that $f(\alpha)=x$
for some ordinal $\alpha$, which follows by the well-ordering theorem
(requiring the axiom of choice). Thus the cardinality of the set (as
the least ordinal) needed to decide a decision problem is a natural
and useful measure of the complexity of the decision problem. We can
conclude that (++) is not compatible with decidability by a first-order
deductive theory (that uses set cardinality as a complexity measure
of decidability), but is compatible with truth in an initial segment
of the Von Neumann hierarchy of pure sets, $V$. $V$ itself is a
class model of first-order set theory. \\
\\
Labelled sets are a good way to see the power of the decision criterion
(++). Labelled sets clearly represent a standard binary coding of
any set, but with the advantage that it is easy to tell which binary
sequences are members of the set or not. There are uncountably more
labelled sets than there can be sets defined in any countable formal
language of set theory, because each $\subseteq([0,1])(\aleph)$ has
labels for all its members and non-members (which is not true for
membership defined by means of formulas through the axiom schemas
of separation or replacement). In fact we can see that all sets $\subseteq([0,1])(\aleph)$
can be labelled for all cardinals $\aleph$. It is worth noting that
labelled sets do not satisfy the axioms of first-order Zermelo-Fraenkel
set theory, because functions cannot be applied to labels in the same
way as to the data that they label; but the axioms could be easily
modified by stripping out the labels (\emph{i.e.} the $Ord(\aleph)+1$-th
nodes), applying the function to binary sequences of length $\aleph$
and adding back the labels. That is, if $L(X)$ is a labelled set
of binary $\aleph$-sequences then we can form the labelled set $\{\langle y,0\rangle:y\in([0,1])(\aleph)\}\sqcup\{\langle y,1\rangle:\langle x,1\rangle\in L(X)\rightarrow y=f(x)\}$,
where $\langle\rangle$ is a $Ord(\aleph)+1$-sequence and $\sqcup$
is a union operator with the property that $\langle y,0\rangle\sqcup\langle y,1\rangle=\langle y,1\rangle$.
It is clear though that labelled sets preserve what sets can be formed
in initial segments of $V$.

\section{Conclusions}

I think the example of search for a member of a set shows, at least
in principle, that taking mathematical objects as constructions (for
example, labelled sets) which can be represented and ordered in different
ways has mathematical consequences. The alternative to the labelled
set approach discussed above is to suppose that there are sets which
in principle we cannot define (by means of finite formulas) and of
which we are not even permitted to see their shadows. 

\bibliographystyle{plain}
\bibliography{0_Users_andrewpowell_Downloads_hyperc1}

\begin{thebibliography}{10}

\bibitem{Boolos1971}
G.~Boolos.
\newblock The iierative conception of set.
\newblock {\em The Journal of Philosophy}, 68(8):215--231, 1971.

\bibitem{Davis2006a}
M.~Davis.
\newblock The incompleteness theorem.
\newblock {\em Notices of the American Mathematical Society}, 53:414--418,
  2006.

\bibitem{Feferman87}
S~Feferman.
\newblock Weyl vindicated: Das kontinuum 70 years later.
\newblock In C.~Cellucci~\& G.Sambin, editor, {\em Temi i prospettive della
  logica e della scienza contemporanee}, volume~1, pages 59--93. CLUEB, 1988.

\bibitem{Hallett86}
M.~Hallett.
\newblock {\em Cantorian Set Theory and Limitation of Size}.
\newblock Oxford Logic Guides. Clarendon Press, 1986.

\bibitem{Hellman89}
G.~Hellman.
\newblock {\em Mathematics without numbers. Towards a modal-structural
  interpretation}.
\newblock Clarendon Press, 1989.

\bibitem{Jech2002}
T.~Jech.
\newblock {\em {Set Theory: The Third Millenium Edition, Revised and
  Expanded}}.
\newblock Springer, 2002.

\bibitem{Kanamori94}
A.~Kanamori.
\newblock {\em The Higher Infinite: Large Cardinals in Set Theory from Their
  Beginnings}.
\newblock Springer, 1994.

\bibitem{Lorenzen87}
P.~Lorenzen.
\newblock {\em Constructive Philosophy}.
\newblock University of Massachusetts Press, 1987.

\bibitem{Powell97}
A.~Powell.
\newblock Philosophy and mathematics.
\newblock {\em Teorema}, XVI(2):97--108, 1997.

\bibitem{Shoen1977}
J.~R. Shoenfield.
\newblock {Axioms of Set Theory}.
\newblock In J.~Barwise, editor, {\em Handbook of Mathematical Logic}, pages
  321--344. North-Holland, Amsterdam, 1977.

\bibitem{Troelstra83}
A.S. Troelstra.
\newblock Analysing chouce sequences.
\newblock {\em Journal of Philosophical Logic}, 12:197--260, 1983.

\end{thebibliography}

\end{document}